\newcommand{\prarrow}[2]{\ar@<0.5ex>[r]^-{#1} \ar@<-0.5ex>[r]_-{#2}}
\newcommand{\plarrow}[2]{\ar@<0.5ex>[l]^-{#1} \ar@<-0.5ex>[l]_-{#2}}
\newcommand{\pdarrow}[2]{\ar@<0.5ex>[d]^-{#1} \ar@<-0.5ex>[d]_-{#2}}
\newcommand{\puarrow}[2]{\ar@<0.5ex>[u]^-{#1} \ar@<-0.5ex>[u]_-{#2}}
\newtheorem{theorem}{Theorem}[section]
\newtheorem{corollary}[theorem]{Corollary}    
\newtheorem{lemma}[theorem]{Lemma}    
\newtheorem{proposition}[theorem]{Proposition}
\newtheorem{questions}[theorem]{Questions}
\newtheorem{question}[theorem]{Question}
\theoremstyle{definition}
\newtheorem{definition}[theorem]{Definition} 
\newtheorem{example}[theorem]{Example}
\author{Eiichi Matsuhashi}
\address{Department of Mathematics, Shimane University, Matsue, Shimane 690-8504, Japan.}
\email{matsuhashi@riko.shimane-u.ac.jp}
\title{Some decomposable continua and Whitney levels of their hyperspaces}
\author{Yoshiyuki Oshima}
\address{Department of Mathematics, Shimane University, Matsue, Shimane 690-8504, Japan.}
\email{n20d101@matsu.shimane-u.ac.jp}
\begin{document}

\begin{abstract}
 
We introduce the new class of continua; {\it $D^{**}$-continua}. The classes of   Wilder continua and   $D^{*}$-continua are strictly contained in the class of  $D^{**}$-continua. Also, the class of  $D$-continua is bigger than the class of $D^{**}$-continua. Using  $D^{**}$-continua,   we give the negative answer to \cite[Question 2]{D}. Furthermore, we prove that being Wilder, being $D$, being $D^*$ and being $D^{**}$ are Whitney properties. 

\end{abstract}

\keywords{Wilder continuum, $D$-continuum, $D^*$-continuum, $D^{**}$-continuum,  decomposable continuum, Whitney map, Whitney property}
\subjclass[2020]{Primary  54F16 ; Secondary 54F15}      

\maketitle

\markboth{Some decomposable continua and Whitney levels}{Eiichi Matsuhashi and Yoshiyuki Oshima}

\section{Introduction}
In this paper, all spaces are  metrizable spaces and  maps are continuous. If $X$ is a space and $A$ is a subset of $X$, then we denote the interior of $A$ in  $X$ by ${\rm Int}_X A$. 
 By a $continuum$ we mean a compact connected metric space.

Let $X$ be
a continuum.  Then, $C(X)$ denotes the  space of all nonempty subcontinua of $X$ endowed with the Hausdorff metric. $C(X)$ is called the $hyperspace$ $of$ $X$. A $Whitney$
$map$  is a map $\mu : C(X) \to [0,\mu(X)]$ satisfying  $\mu(\{x\})=0$ for each $x \in X$ and $\mu(A) < \mu(B)$  whenever $A, B \in C(X)$ and $A \subsetneq B$. It is well-known  that for each Whitney map  $\mu: C(X) \to [0,\mu(X)]$   
and  each $t \in [0, \mu(X)]$,  $\mu^{-1}(t)$ is a continuum (\cite[Theorem 19.9]{illanes}). Each $\mu^{-1}(t)$ is called a $Whitney$ $level$.

A continuum $X$ is said be $aposyndetic$ if for any two distinct points $x,y \in X$, there exists a subcontinuum $T \subseteq X$ such that $x \in {\rm Int}_X T \subseteq T \subseteq X \setminus \{y\}$. We say that a continuum $X$ is  $semiaposyndetic$ if for any two distinct points $x,y \in X$, there exists a subcontinuum $T \subseteq X$ such that either "$x \in {\rm Int}_X T \subseteq T \subseteq X \setminus \{y\}$", or "$y \in {\rm Int}_X T \subseteq T \subseteq X \setminus \{x\}$". 
A continuum $X$ is called a {\it Wilder continuum} if for any three  distinct points $x,y,z \in X$, there exists a subcontinuum $C \subseteq X$ such that $x \in C$ and  $C$ contains exactly one of $y$ and $z$. Wilder introduced the notion of a Wilder continuum in \cite{wilder2}.\footnote{Wilder  named the continua as $C$-continua. However, we use the term Wilder continua following the notion proposed in \cite{kk}.} Espinoza and the first author proved that each semiaposyndetic continuum is Wilder (\cite[Theorem 3.6]{D}).  

Let $X$ be a continuum. Then, $X$ is called a $D$-$continuum$ if for each disjoint nondegenerate subcontinua $A,B \subseteq X$, there exists a subcontinuum $C \subseteq X$ such that $A \cap C \neq \emptyset \neq B \cap C$ and $(A \cup B) \setminus C \neq \emptyset$. 
In addition, if we require $A\setminus C\neq\emptyset$ and $B\setminus C\neq\emptyset$, then the continuum $X$ is called a {\it $D^*$-continuum}.

Note that every arcwise connected continuum is Wilder and $D^*$. It is clear that every $D^*$-continuum is a $D$-continuum. Also, each Wilder continuum is a $D$-continuum \cite[Theorem 5.18]{loncar4}. A continuum
is said to be $decomposable$ if it is  the union of two proper subcontinua. If a continuum $X$ is not decomposable, then $X$ is said to be $indecomposable$.  Since every nondegenerate indecomposable continuum has uncountably many composants \cite[Theorem 11.15]{nadler1}, we can  easily  see that every nondegenerate $D$-continuum is decomposable. For other relationships between the classes of the above  continua and other classes of continua, see \cite[Figure  6]{D}. 

$D$-continua have some nice properties, see \cite{wwp}, \cite{D}, \cite{loncar1}, \cite{loncar2}, \cite{loncar3}, and \cite{loncar4}. Furthermore,   $D$-continua are related to   continua introduced by  Janiszewski \cite{jani}. It is known that Janiszewski’s continua are arc-like, arcless and hereditarily decomposable. 
In \cite[Section 6]{D}, Espinoza and the first author proved that there exists  Janiszewski's continuum which is a hereditarily $D$-continuum and contains neither a Wilder continuum nor a $D^*$-continuum. On the other hand, by \cite[Theorem 1]{wilder2} we can see that each arc-like Wilder continuum is an arc.  In connection with these results,  in \cite[Question 2]{D} Espinoza and the first author asked the existence of a continuum which is arc-like, $D^*$ and not an arc.

A topological property $P$ is called a {\it Whitney property} if a continuum $X$ has property $P$, so does $\mu^{-1}(t)$ for each Whitney map  $\mu$ for $C(X)$ and  each $t \in [0, \mu(X))$.  
 With respect to this property, many researches have been done so far. For example, arcwise connectedness, aposyndesis, semiaposyndesis, decomposability are Whitney properties (see  \cite[Theorem 3.4]{kra}, \cite[Theorem 14.8]{nadler2}, \cite[Propositions 9 and 12]{petrus}).  
 For further information about Whitney properties, see \cite[Chapter 8]{illanes}.

In this paper, 
we introduce the new class of continua; {\it $D^{**}$-continua}. In section 2, we give the definition of a $D^{**}$-continuum. Also, we show that the classes of   Wilder continua and   $D^{*}$-continua are strictly contained in the class of  $D^{**}$-continua, and  the class of  $D$-continua is bigger than the class of $D^{**}$-continua.    In section 3, using $D^{**}$-continua, we give the negative answer to \cite[Question 2]{D}. In  section 4, we prove that being Wilder, being $D$, being $D^*$ and being $D^{**}$ are Whitney properties. In section 5, we pose some questions.

\section{$D^{**}$-continua}
In this section, we introduce the notion of a $D^{**}$-continuum. Also, we  investigate the relationships between the class of $D^{**}$-continua and other classes of continua. 


\begin{definition}
Let $X$ be a continuum. Then, $X$ is called a $D^{**}$-$continuum$ if  for each disjoint nondegenerate subcontinua $A,B \subseteq X$, there exists a subcontinuum $C \subseteq X$ such that $A \cap C \neq \emptyset \neq B \cap C$ and $B \setminus C \neq \emptyset$.
\end{definition}

It is easy to see  that a continuum $X$ is a $D^{**}$-continuum if and only if for each $a \in X$ and each subcontinuum $B \subseteq X$ with $a \notin B$, there exists a subcontinuum $C \subseteq X$ such that $a \in C$, $B \cap C \neq \emptyset$ and $B \setminus C \neq \emptyset$.



\smallskip

The following proposition is easy to prove. Hence, we omit the proof.

\begin{proposition}
Every Wilder continuum and every $D^*$-continuum are $D^{**}$. Also, every $D^{**}$-continuum is $D$.

\label{D**}
\end{proposition}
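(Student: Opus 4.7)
The plan is to verify each of the three inclusions directly from the definitions, noting that two of them are essentially tautological once one lines up the clauses in the three definitions, while the implication from Wilder is where the actual argument lives.

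For $D^{*}\Rightarrow D^{**}$ and $D^{**}\Rightarrow D$, I would simply observe that the witnessing subcontinuum $C$ produced by the stronger hypothesis already witnesses the weaker one, with no modification. In the first case, the $D^{*}$-definition demands $A\setminus C\neq\emptyset$ and $B\setminus C\neq\emptyset$; discarding the first of these two conditions is exactly the $D^{**}$-condition. In the second case, $B\setminus C\neq\emptyset$ trivially implies $(A\cup B)\setminus C\neq\emptyset$, so the same $C$ serves as a $D$-witness. So for these two implications I would just write one sentence each, pointing out the relevant inclusion of conditions.

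The substantive step is Wilder $\Rightarrow D^{**}$. Here I would start with disjoint nondegenerate subcontinua $A,B\subseteq X$, pick any point $x\in A$, and use the nondegeneracy of $B$ to choose two distinct points $y,z\in B$. Since $A\cap B=\emptyset$, the three points $x,y,z$ are pairwise distinct, so the Wilder hypothesis produces a subcontinuum $C\subseteq X$ with $x\in C$ and $C$ containing exactly one of $y,z$. Then $x\in A\cap C$ gives $A\cap C\neq\emptyset$, the point of $\{y,z\}$ that lies in $C$ witnesses $B\cap C\neq\emptyset$, and the remaining point of $\{y,z\}$ witnesses $B\setminus C\neq\emptyset$. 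This is exactly the $D^{**}$-condition.

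I do not expect a genuine obstacle: the design of $D^{**}$ is precisely tailored to require only that \emph{one} of the two continua ($B$) stick out of the witness $C$, which is what the Wilder property naturally supplies via the ``exactly one of $y,z$'' clause once we spend the nondegeneracy of $B$ to produce two test points. The only place care is needed is in checking that $x,y,z$ are three distinct points, for which the disjointness of $A$ and $B$ is used.
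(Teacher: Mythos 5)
Your proposal is correct, and since the paper explicitly omits the proof as "easy," your argument is exactly the intended one: the $D^{*}\Rightarrow D^{**}$ and $D^{**}\Rightarrow D$ implications are immediate from weakening the conditions on the witness $C$, and the Wilder case is handled by spending the nondegeneracy of $B$ on two distinct points $y,z\in B$ and one point $x\in A$, with disjointness guaranteeing the three points are distinct. No gaps.
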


Note that the $\sin (\frac{1}{x})$-continuum is a $D$-continuum which is not a $D^{**}$-continuum. Hence, the class of $D$-continua is bigger than the class of $D^{**}$-continua. 

By the following example, we see that there exists a $D^{**}$-continuum which is neither Wilder nor $D^*$. Therefore, the classes of Wilder continua and $D^{*}$-continua are strictly contained in the class of $D^{**}$-continua.

\begin{example}
Let $X=\{(t,{\rm sin}(\frac{1}{t})+2)\in \mathbb{R}^2~|~0<t\leq  6\}\cup (\{0\}\times [-{\rm sin}(1)-2,3])\cup ([0,-{\rm cos}(1)+2]\times \{-{\rm sin}(1)-2\})\cup \{(-t-|{\rm cos}(\frac{1}{t})|+3,-{\rm sin}(\frac{1}{t})-2)\in \mathbb{R}^2~|~0<t\leq 1\}\cup \{({\rm cos}(t)+3,{\rm sin}(t)-2)\in \mathbb{R}^2~|~0\leq t\leq 2\pi\}\cup \{(t+|{\rm cos}(\frac{1}{t})|+3,-{\rm sin}(\frac{1}{t})-2)\in \mathbb{R}^2~|~0<t\leq 1\}\cup ([{\rm cos}(1)+4,5]\times \{-{\rm sin}(1)-2\})\cup \{(-t+6,-{\rm sin}(\frac{1}{t})-2)\in \mathbb{R}^2~|~0<t\leq 1\}\cup (\{6\}\times [-3,{\rm sin}(\frac{1}{6})+2])$ (see Figure \ref{fig1}).  There are four arc components in $X$.  It is not difficult to see that $X$ is $D^{**}$.

To see $X$ is not Wilder, take $x,y,z \in X$ as in Figure \ref{fig1}. Then, there does not exist  a subcontinuum of $X$ containing $x$ and exactly one of $y$ and $z$. Hence, $X$ is not Wilder.

To see $X$ is not $D^{*}$, take  subcontinua $A, B  \subseteq X$ as in Figure \ref{fig1}. Then, for each subcontinuum $C \subseteq X$ with $A \cap C \neq \emptyset \neq B \cap C$, $A \setminus C= \emptyset$ or $B \setminus C= \emptyset$. Hence, $X$ is not $D^{*}$. 
\label{exampleD**}
\end{example}






\begin{figure}

\includegraphics[scale=0.40]{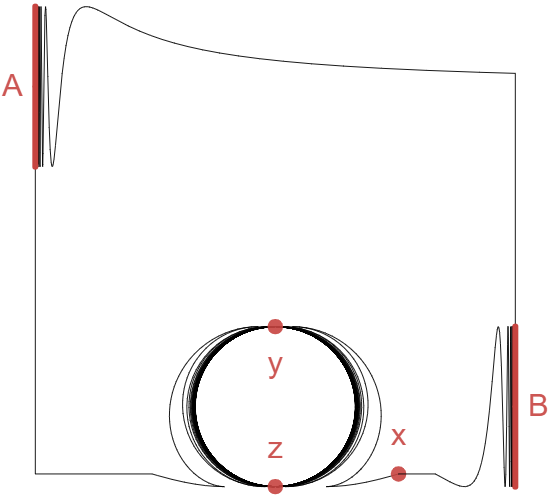}
\caption{A $D^{**}$-continuum which is neither Wilder nor $D^*$. }
\label{fig1}
\end{figure}


\section{Irreducible continua}


In this section, we give the negative answer to \cite[Question 2]{D}. First, we prove Theorem \ref{main} which is related to \cite[Theorem 1]{wilder2}.  Target spaces dealt in \cite[Theorem 1]{wilder2} is not necessarily metrizable.  However, Theorem \ref{main} is stronger than  \cite[Theorem 1]{wilder2} when  target spaces are restricted to  metrizable spaces.

Let $X$ be a nondegenerate continuum. Then $X$ is said to be {\it arc-like} if for each $\varepsilon > 0$, there exists a surjective map $f : X \to [0,1]$ such that for each $y \in [0,1]$, ${\rm diam} f^{-1}(y) < \varepsilon$. Also, $X$ is called a  \emph{irreducible continuum}  if there exist two points $a,b \in X$ such that for each subcontinuum $C \subseteq X$ with $a, b \in C$, $X=C$. In this 
case, we say that {\it $X$ is irreducible between $a$ and $b$}.  Note that each arc-like continuum is irreducible (\cite[Theorem 12.5]{nadler1}). 

Let $X$ be a nondegenerate continuum. A nondegenerate subcontinuum $C \subseteq X$ is called a {\it convergence continuum of $X$} provided that there exists a sequence $\{C_i\}_{i=1}^\infty$ of subcontinua of $X$ such that $\lim\limits_{i \to \infty}C_i=C$ and $C \cap C_i= \emptyset$ for each $i \ge 1$ (see \cite[Definition 5.11]{nadler1}).
\begin{theorem}
If  $X$ is an  irreducible $D^{**}$-continuum, then $X$ is an arc.
\label{main}
\end{theorem}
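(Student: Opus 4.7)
The strategy is to invoke the Kuratowski monotone decomposition of the irreducible continuum $X$ and then show that every layer (tranche) must be degenerate, so that $X$ is homeomorphic to $[0,1]$. Since $X$ is irreducible between two points $a,b$, the Kuratowski decomposition provides a monotone continuous surjection $\pi\colon X \to [0,1]$ with $\pi(a)=0$, $\pi(b)=1$, whose fibers $L_t := \pi^{-1}(t)$ are subcontinua with empty interior in $X$, and satisfying the key \emph{layer-crossing property}: for any subcontinuum $C \subseteq X$ and any $t$ in the topological interior of $\pi(C) \subseteq [0,1]$, we have $L_t \subseteq C$. To prove the theorem it suffices to show each $L_t$ is a singleton, since then $\pi$ is a homeomorphism.

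Suppose for contradiction that some $L := L_{t_0}$ is nondegenerate; I first handle the interior case $0<t_0<1$. Since $L$ has empty interior in $X$, every $y \in L$ belongs to $\overline{X \setminus L} = \overline{\pi^{-1}[0,t_0)} \cup \overline{\pi^{-1}(t_0,1]}$, so $L = B_- \cup B_+$ where $B_\pm := L \cap \overline{\pi^{-1}[0,t_0)}$, resp.\ $L \cap \overline{\pi^{-1}(t_0,1]}$. Because $L$ is a nondegenerate continuum (hence a nonempty Baire space), not both $B_\pm$ can be nowhere dense in $L$; so without loss of generality $B_-$ has nonempty interior in $L$, and by the boundary bumping theorem applied inside $L$ this interior contains a nondegenerate subcontinuum $B \subseteq B_-$. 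As $\pi(B) = \{t_0\} \ne 0 = \pi(a)$, we have $a \notin B$. Apply the $D^{**}$ property to the pair $(a,B)$ (using the equivalent characterization recorded after the definition of $D^{**}$): there is a subcontinuum $C \subseteq X$ with $a \in C$, $C \cap B \ne \emptyset$, and $B \setminus C \ne \emptyset$. Its image $\pi(C)$ is a subinterval of $[0,1]$ containing $0$ and $t_0$, so $\pi(C) = [0,\beta]$ with $\beta \ge t_0$. If $\beta > t_0$, then $t_0$ is interior to $\pi(C)$, and layer-crossing gives $L \subseteq C$, hence $B \subseteq C$—contradicting $B \setminus C \ne \emptyset$. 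Otherwise $\beta = t_0$, so $C \subseteq \pi^{-1}[0,t_0]$; applying layer-crossing at every $s \in (0,t_0)$ yields $L_s \subseteq C$, whence $C \supseteq \pi^{-1}((0,t_0))$, and since $C$ is closed, $C \supseteq \overline{\pi^{-1}((0,t_0))} \supseteq L \cap \overline{\pi^{-1}[0,t_0)} = B_- \supseteq B$, again contradicting $B \setminus C \ne \emptyset$.

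The boundary cases $t_0 \in \{0,1\}$ are analogous but simpler: if $L = L_0$ is nondegenerate, apply $D^{**}$ to $(b,L)$ with $b \notin L$; the resulting $C$ has $\pi(C) = [0,1]$, so layer-crossing at every $s \in (0,1)$ combined with the closedness of $C$ and the density of $\pi^{-1}((0,1))$ in $X$ (which uses that $L_0, L_1$ have empty interior in $X$) force $C \supseteq L$, contradicting $L \setminus C \ne \emptyset$. The main obstacle is constructing the witnessing nondegenerate subcontinuum $B$ in the middle step: this requires careful use of the Baire-category argument to force one of $B_\pm$ to have nonempty $L$-interior, followed by an application of boundary bumping inside $L$ to extract a genuine nondegenerate subcontinuum; the rest of the proof is a formal manipulation of the layer-crossing property. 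A subsidiary, purely bookkeeping difficulty is verifying the identification $L \cap \overline{\pi^{-1}((0,t_0))} = B_-$, which uses $a \notin L$ to ignore the single missing endpoint in $\pi^{-1}[0,t_0)$.
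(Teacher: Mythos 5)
Your argument has a genuine gap at its very first step. The Kuratowski ``canonical decomposition'' --- a monotone surjection $\pi\colon X\to[0,1]$ whose fibers are subcontinua with empty interior --- does \emph{not} exist for an arbitrary irreducible continuum. It exists precisely when $X$ is irreducible \emph{of type $\lambda$}, i.e., when every indecomposable subcontinuum of $X$ has empty interior (Kuratowski, Topology II, \S 48). An indecomposable continuum is irreducible but admits no monotone surjection onto $[0,1]$ at all (a monotone image of an indecomposable continuum is indecomposable or a point), and the same obstruction persists whenever $X$ contains an indecomposable subcontinuum with nonempty interior. Knowing that $X$ is $D^{**}$ (hence $D$, hence decomposable) only rules out $X$ itself being indecomposable; it says nothing directly about indecomposable subcontinua with interior. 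So before you may invoke $\pi$, you must use the $D^{**}$ hypothesis to prove that $X$ is of type $\lambda$ --- and that is essentially where the whole difficulty of the theorem is concentrated. As written, you have assumed the hard part. (A secondary, citable but unproved ingredient is the layer-crossing property $t\in\mathrm{Int}\,\pi(C)\Rightarrow L_t\subseteq C$; this is a true property of the canonical decomposition, but it too only makes sense once the decomposition exists.)

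Granting the decomposition, the remainder of your argument --- the Baire-category choice of $B_{\pm}$, boundary bumping inside $L$ to extract a nondegenerate $B$, and the two-case analysis on $\pi(C)$ --- is essentially correct, so the proposal is repairable in principle, but only by supplying a type-$\lambda$ argument. For comparison, the paper avoids the decomposition entirely: it uses the $D^{**}$ property twice to show that $X$ contains no convergence continuum (if $C=\lim C_i$ with $C\cap C_i=\emptyset$, one produces continua $K\ni a$ and $L\ni b$ meeting $C$ but not covering it, forces $X=K\cup C\cup L$ by irreducibility, and derives $C\subseteq K\cup L$, a contradiction); absence of convergence continua gives local connectedness by \cite[Theorem 10.4]{nadler1}, and a locally connected irreducible continuum is an arc by \cite[\S 50, II, Theorem 6]{kuratowski}. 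That route bypasses the type-$\lambda$ issue that your proposal leaves open.
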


\begin{proof} Let $a$ and $b$ be points of irreducibility of $X$. 
Assume there exists a convergence continuum $C \subseteq X$. If $a, b \in C$, then $X=C$. This is a contradiction. Hence, we may assume $a \notin C$. Then, there exists a sequence  $\{C_i\}_{i=1}^{\infty}$ of subcontinua of $X$ such that $\lim\limits_{i\to\infty} C_i =C$ and $C \cap C_i = \emptyset$ for each $i \ge 1$. Since $X$ is $D^{**}$, there exists a subcontinuum $K \subseteq X$ such that $a \in K,$ $K \cap C \neq  \emptyset$ and $C \setminus K \neq \emptyset$. Take a subcontinuum $C' \subseteq C \setminus (K \cup \{b\})$. Since $X$ is $D^{**}$, there exists a subcontinuum $L \subseteq X$ such that $b \in L, ~  L \cap C' \neq \emptyset$ and $C' \setminus L \neq \emptyset$. Note that $K \cup C \cup L \subseteq X$ is a subcontinuum containing $a$ and $b$. Hence, $X = K \cup C \cup L$. Since $C \cap C_i = \emptyset$ for each $i \ge 1$,  $\bigcup_{i=1}^\infty C_i \subseteq K \cup L$. Therefore, $C \subseteq K \cup L$. This is a contradiction since $(C' \setminus L) \cap (K \cup L) = \emptyset$ and $C' \setminus L \subseteq C$.  Hence, $X$ does not contain a convergence continuum. By \cite[Theorem 10.4]{nadler1}, we see that $X$ is a locally connected continuum. Since $X$ is irreducible, by \cite[\S 50, II, Theorem 6]{kuratowski}, it follows that $X$ is an arc.  \end{proof}



Here, we introduce \cite[Question 2]{D}. 

\begin{question}{\rm (\cite[Question 2]{D})}
Is there an arc-like $D^*$-continuum which is not an arc ?

\end{question}

By Proposition \ref{D**} and Theorem \ref{main}, we can easily see the following corollary. Since each arc-like continuum is irreducible, the following result  gives the negative answer to \cite[Question 2]{D}.

\begin{corollary}{\rm (cf. \cite[Theorem 1]{wilder2})}
Let $X$ be a nondegenerate continuum. Then, the following are equivalent. 

\begin{itemize}
    \item[(1)] $X$ is an irreducible Wilder continuum,
    \item[(2)] $X$ is an irreducible $D^*$-continuum,
    \item[(3)] $X$ is an irreducible $D^{**}$-continuum, and 
    \item[(4)] $X$ is an arc.
    
\end{itemize}

\label{maincor}

\end{corollary}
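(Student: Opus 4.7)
The plan is to prove the equivalences by setting up the cycle $(4) \Rightarrow (1), (2) \Rightarrow (3) \Rightarrow (4)$, since nearly all the work has already been done in Theorem \ref{main} and Proposition \ref{D**}.

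First I would handle $(4) \Rightarrow (1)$ and $(4) \Rightarrow (2)$. An arc is arcwise connected, and the paper has already observed that every arcwise connected continuum is both Wilder and $D^*$. Moreover, an arc is irreducible between its two endpoints, so if $X$ is an arc then $X$ is simultaneously an irreducible Wilder continuum and an irreducible $D^*$-continuum. This step is immediate and requires no new argument.

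Next I would handle $(1) \Rightarrow (3)$ and $(2) \Rightarrow (3)$. By Proposition \ref{D**}, every Wilder continuum is $D^{**}$ and every $D^*$-continuum is $D^{**}$. The irreducibility hypothesis is a property of $X$ itself, not of the class, so it transfers verbatim. Thus an irreducible Wilder (respectively $D^*$) continuum is an irreducible $D^{**}$-continuum.

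Finally, $(3) \Rightarrow (4)$ is precisely the content of Theorem \ref{main}. Combining this with the observation that every arc-like continuum is irreducible (cited in the excerpt from \cite[Theorem 12.5]{nadler1}) answers \cite[Question 2]{D} in the negative: a putative arc-like $D^*$-continuum would be irreducible and $D^{**}$ by Proposition \ref{D**}, hence an arc by the corollary.

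There is essentially no obstacle here — the only thing to be careful about is to cite the right facts in the right direction and to remark why the corollary indeed answers \cite[Question 2]{D} (namely, via arc-likeness $\Rightarrow$ irreducibility combined with $(2) \Leftrightarrow (4)$).
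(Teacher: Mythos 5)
Your proposal is correct and matches the paper's intended argument exactly: the paper derives the corollary from Proposition \ref{D**} (giving $(1),(2)\Rightarrow(3)$), Theorem \ref{main} (giving $(3)\Rightarrow(4)$), and the standing observation that arcwise connected continua are Wilder and $D^*$ (giving $(4)\Rightarrow(1),(2)$). Your remark on why this answers \cite[Question 2]{D} via the irreducibility of arc-like continua is also the same as the paper's.
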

Let $X$ be a continuum. Then, $X$ is said to be {\it hereditarily arcwise connected} if each of its subcontinuum is arcwise connected. Also, if each subcontinuum of $X$ is  Wilder (resp.   $D$,  $D^*$, $D^{**}$), then $X$ is called  {\it a hereditarily Wilder continuum} (resp. {\it  a hereditarily $D$-continuum, a hereditarily $D^*$-continuum, a hereditarily $D^{**}$-continuum}).

Note that each subcontinuum of an arc-like continuum is also arc-like. Hence, by using  Corollary \ref{maincor} and \cite[Corollary 5.12]{D}, we can get the following result which is  
 in contrast to the previous corollary.  

\begin{corollary}
There exists a Janiszewski continuum which is a  heredirarily $D$-continuum and contains no $D^{**}$-continua.  

\end{corollary}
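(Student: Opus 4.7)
The plan is to show that the very Janiszewski continuum $X$ already produced in \cite[Corollary 5.12]{D}---the one which is a hereditarily $D$-continuum and contains no nondegenerate Wilder subcontinuum (and no nondegenerate $D^{*}$-subcontinuum)---automatically has the stronger property that it contains no nondegenerate $D^{**}$-subcontinuum. The corollary then follows with this same $X$.

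To verify the claim, I would argue by contradiction. Suppose $Y \subseteq X$ is a nondegenerate $D^{**}$-subcontinuum. Since $X$ is a Janiszewski continuum, $X$ is arc-like; and as noted in the remark just before the corollary, every subcontinuum of an arc-like continuum is arc-like, so $Y$ is arc-like. By \cite[Theorem 12.5]{nadler1}, every arc-like continuum is irreducible, so $Y$ is an irreducible $D^{**}$-continuum. Corollary \ref{maincor} (specifically the implication (3)$\Rightarrow$(4)) then forces $Y$ to be an arc. But an arc is arcwise connected, hence Wilder, contradicting the fact that $X$ contains no nondegenerate Wilder subcontinuum.

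I do not anticipate any serious obstacle: the argument is a direct combination of Corollary \ref{maincor} with the existing construction from \cite[Corollary 5.12]{D}, using only the standard facts that arc-likeness passes to subcontinua and that arcs are Wilder. The one point worth flagging is the reading of ``contains no $D^{**}$-continua'': under the definition in Section 2, a singleton vacuously satisfies the $D^{**}$ condition (there are no disjoint nondegenerate subcontinua inside it), so the phrase has to be interpreted as asserting the absence of \emph{nondegenerate} $D^{**}$-subcontinua---exactly the convention implicitly used for ``Wilder continuum'' and ``$D^{*}$-continuum'' in \cite[Corollary 5.12]{D}.
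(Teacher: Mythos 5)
Your argument is exactly the one the paper intends: it combines Corollary \ref{maincor} with \cite[Corollary 5.12]{D}, using that subcontinua of arc-like continua are arc-like (hence irreducible), so any nondegenerate $D^{**}$-subcontinuum would be an arc and therefore Wilder, a contradiction. The proposal is correct and matches the paper's (unwritten but indicated) proof.
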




 For each continuum $X$ and  each two distinct points $a,b \in X$, by using Zorn's Lemma, we see that there exists a subcontinuum of $X$ which is irreducible between $a$ and $b$. Hence, we can see the following result.
\begin{corollary}{\rm (cf. \cite[Corollary 2]{wilder2})}
Let $X$ be a nondegenerate continuum. Then, the following are equivalent. 

\begin{itemize}
    \item[(1)] $X$ is hereditarily Wilder,
    \item[(2)] $X$ is hereditarily $D^*$,
    \item[(3)] $X$ is hereditarily $D^{**}$, and 
    \item[(4)] $X$ is hereditarily arcwise connected.
    
\end{itemize}
\label{corhere}
\end{corollary}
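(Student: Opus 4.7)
The plan is to close the four implications into a cycle, using the results already established. The easy direction is $(4) \Rightarrow (1)$ and $(4) \Rightarrow (2)$: every arcwise connected continuum is Wilder and $D^{*}$ (recorded in the introduction), so applying this to each subcontinuum gives the two implications. Next, $(1) \Rightarrow (3)$ and $(2) \Rightarrow (3)$ follow immediately from Proposition \ref{D**}, applied to every subcontinuum of $X$.

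The substantive step is $(3) \Rightarrow (4)$, and here I would argue as follows. Let $Y$ be a subcontinuum of $X$, and let $a, b \in Y$ be two distinct points. By the Zorn's Lemma observation stated just before the corollary, there exists a subcontinuum $Z \subseteq Y$ which is irreducible between $a$ and $b$. Since $X$ is hereditarily $D^{**}$, so is $Z$; in particular $Z$ is an irreducible $D^{**}$-continuum. Corollary \ref{maincor} then forces $Z$ to be an arc. Thus $Z$ provides an arc in $Y$ from $a$ to $b$, proving that $Y$ is arcwise connected. Since $Y$ was arbitrary, $X$ is hereditarily arcwise connected.

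There is essentially no obstacle to overcome: the real work has already been done in Theorem \ref{main} and Corollary \ref{maincor}. The only point that requires care is recognising that hereditary $D^{**}$ passes to the irreducible subcontinuum $Z$ so that Corollary \ref{maincor} becomes applicable; once this is noted, the cycle of implications closes immediately and the equivalence of $(1)$--$(4)$ follows.
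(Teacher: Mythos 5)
Your proposal is correct and is exactly the argument the paper intends: the paper omits a formal proof but signals it via the Zorn's Lemma remark immediately preceding the corollary, namely that any two points of a subcontinuum lie in an irreducible subcontinuum, which by Corollary \ref{maincor} must be an arc. The easy implications via Proposition \ref{D**} and the fact that arcwise connected continua are Wilder and $D^*$ close the cycle just as you describe.
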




    
    




    


\section{Whitney properties}

In this section, we prove that being Wilder, being $D$, being $D^{*}$ and being $D^{**}$ are Whitney properties. 

First, we introduce the following lemma which is used throughout this section.

\begin{lemma}{\rm (\cite[Lemma 14.8.1]{nadler2})}
Let $X$ be a continuum, let $\mu:C(X) \to [0,\mu(X)]$ be a Whitney map and let $t \in (0, \mu(X))$. Then, for each distinct elements $A,B \in \mu^{-1}(t)$ with $A \cap B \neq \emptyset$,  there exists an arc $\mathcal{I}_{A,B} \subseteq \mu^{-1}(t)$ from $A$ to $B$ such that for each $L \in \mathcal{I}_{A,B}, ~ L \subseteq A \cup B$. Moreover, if $K$ is any given component of $A \cap B$, then   $\mathcal{I}_{A,B}$ may be chosen as above so that $K \subseteq L$ for each $L  \in \mathcal{I}_{A,B}$.
\label{lemmanadler}
\end{lemma}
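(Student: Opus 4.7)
The plan is to construct $\mathcal{I}_{A,B}$ via \emph{segments} (order arcs parametrized by Whitney value) combined with a planar monotonicity argument. Set $D = A \cup B$; this is a subcontinuum of $X$ because $A \cap B \neq \emptyset$. Note $K \subseteq A \cap B$, and $\mu(K) < t$ (otherwise $K = A = B$, contradicting $A \neq B$). The goal is to produce a one-parameter family of subcontinua of $D$, each containing $K$ and of Whitney value $t$, running continuously from $A$ to $B$.

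First I would pick segments $\alpha, \beta : [\mu(K), t] \to C(D)$ with $\alpha(\mu(K)) = K = \beta(\mu(K))$, $\alpha(t) = A$, $\beta(t) = B$, and $\mu \circ \alpha = \mathrm{id} = \mu \circ \beta$; these exist by the standard order-arc theory in hyperspaces. Then I would form the continuous map $F : [\mu(K), t]^2 \to C(D)$ by $F(r,s) = \alpha(r) \cup \beta(s)$, noting that each $F(r,s)$ is a continuum (both $\alpha(r)$ and $\beta(s)$ contain $K$) inside $D$. Set $\psi = \mu \circ F$. Monotonicity of $\mu$ under inclusion shows $\psi$ is continuous and non-decreasing in each variable, and a short boundary computation (using crucially that $K$ is a \emph{component} of $A \cap B$, so that $\alpha(r) \subseteq B$ forces $\alpha(r) \subseteq K$, and similarly for $\beta(s)$) gives $\psi(r, \mu(K)) = r$, $\psi(\mu(K), s) = s$, $\psi(r,t), \psi(t,s) \ge t$ with equality only at $r = \mu(K)$ and $s = \mu(K)$ respectively; in particular $\psi(t, \mu(K)) = t = \psi(\mu(K), t)$ and $\psi(t,t) = \mu(D) > t$.

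The heart of the argument is to find inside $\psi^{-1}(t)$ an arc $J$ in the square running from $(\mu(K), t)$ to $(t, \mu(K))$. Because $\psi$ is continuous and non-decreasing in each coordinate, the sublevel set $\{\psi \le t\}$ is a closed downward-closed subregion of $[\mu(K), t]^2$, whose upper staircase is the graph of the non-increasing function $\hat{g}(r) = \max\{s : \psi(r,s) \le t\}$, satisfying $\hat{g}(\mu(K)) = t$ and $\hat{g}(t) = \mu(K)$. At each of the (at most countably many) jump discontinuities of $\hat{g}$, continuity of $\psi$ combined with monotonicity forces $\psi \equiv t$ along the corresponding vertical segment, so the graph of $\hat{g}$ completed by these vertical fillers is an arc $J \subseteq \psi^{-1}(t)$ joining the prescribed corners. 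Pushing $J$ forward by $F$ gives a continuous path in $\mu^{-1}(t) \cap \{L \in C(D) : K \subseteq L\}$ from $B = F(\mu(K), t)$ to $A = F(t, \mu(K))$, and a final appeal to the arc lemma (any path between two distinct points in a Hausdorff space contains an arc between them) extracts the desired $\mathcal{I}_{A,B}$.

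The main obstacle is this planar/monotonicity step: verifying rigorously that the graph of $\hat{g}$ together with its jump-fillers really is an arc lying inside $\psi^{-1}(t)$, and pinning down its endpoints on the correct boundary corners. All other ingredients --- existence of segments, monotonicity of $\mu$ under inclusion, and the extraction of an arc from a continuous path --- are standard hyperspace tools.
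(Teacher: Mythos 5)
The paper does not prove this lemma at all --- it is quoted verbatim from Nadler's \emph{Hyperspaces of Sets} (Lemma 14.8.1) and used as a black box --- so there is no in-paper proof to compare against. Your argument is a correct, essentially self-contained reconstruction along the standard lines of Nadler's original proof: order arcs from $K$ to $A$ and from $K$ to $B$ parametrized by Whitney value, the two-parameter family $F(r,s)=\alpha(r)\cup\beta(s)$, and the monotone staircase $\hat{g}$ in the parameter square. The one step you flag as the ``main obstacle'' does go through: left-continuity of $\hat{g}$ follows from monotonicity of $\psi$, the vertical fillers at the (right-hand) jumps lie in $\psi^{-1}(t)$ by squeezing $\psi$ between the values on either side of the jump, and the completed graph is an arc because $(r,s)\mapsto r-s$ is a continuous injection on it. You also place the hypothesis that $K$ is a \emph{component} of $A\cap B$ exactly where it is needed, namely to force $\psi(r,t)>t$ for $r>\mu(K)$ (and symmetrically), which pins the endpoints of the staircase at $(\mu(K),t)$ and $(t,\mu(K))$, i.e.\ at $B$ and $A$. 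The final passage from a path to an arc inside its image is the standard Hausdorff-space fact and preserves both conclusions ($L\subseteq A\cup B$ and $K\subseteq L$), so the proof is complete.
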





\begin{theorem}
Being Wilder is a Whitney property.

\end{theorem}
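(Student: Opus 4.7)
The plan is to take three distinct elements $A, B, C \in \mu^{-1}(t)$ and produce a subcontinuum of $\mu^{-1}(t)$ containing $A$ together with exactly one of $B, C$. I split into cases according to whether $A$ meets either of the other two as subcontinua of $X$. First suppose, after relabeling, that $A \cap B \neq \emptyset$. Lemma~\ref{lemmanadler} then yields an arc $\mathcal{I}_{A,B} \subseteq \mu^{-1}(t)$ from $A$ to $B$. If $C \notin \mathcal{I}_{A,B}$, this arc already witnesses the Wilder property. If $C \in \mathcal{I}_{A,B}$, I pass to the closed sub-arc of $\mathcal{I}_{A,B}$ running from $A$ to $C$; by injectivity of the arc's parameterization the other endpoint $B$ cannot lie on this sub-arc, so it is a subcontinuum of $\mu^{-1}(t)$ containing $A$ and $C$ and missing $B$.

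The remaining case is $A \cap B = A \cap C = \emptyset$, and here I fall back on Wilderness of $X$ itself. Since distinct elements of $\mu^{-1}(t)$ cannot contain one another, I pick $a \in A$, $b \in B \setminus C$, and $c \in C \setminus B$; these three points of $X$ are automatically pairwise distinct. Wilderness of $X$ produces a subcontinuum $K \subseteq X$ containing $a$ together with exactly one of $b, c$; after possibly swapping the roles of $B$ and $C$, I assume $b \in K$ and $c \notin K$. Set $D := A \cup K \cup B$. This is a subcontinuum of $X$ because the three pieces are joined at $a \in A \cap K$ and $b \in K \cap B$, and since $c$ lies in none of $A$, $K$, $B$ we have $c \notin D$, hence $C \not\subseteq D$.

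To finish, I observe that $\mu|_{C(D)}$ is a Whitney map for $D$, so (since $t \le \mu(D)$) the set $\mathcal{W} := \mu^{-1}(t) \cap C(D)$ is a Whitney level of $D$ and is therefore itself a continuum. This $\mathcal{W}$ sits inside $\mu^{-1}(t)$, contains both $A$ and $B$ (each is a subcontinuum of $D$ of $\mu$-value $t$), and does not contain $C$, so it serves as the required witness. The main delicacy is the point selection $b \in B \setminus C$ and $c \in C \setminus B$ in the second case: without this care the point not chosen by $K$ could lie inside the other of $B, C$ and fail to witness $C \not\subseteq D$ (or $B \not\subseteq D$ after relabeling). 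Apart from that, the argument is a routine combination of Lemma~\ref{lemmanadler} with the ``Whitney level of a subcontinuum'' construction.
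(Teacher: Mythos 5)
Your proof is correct. It rests on the same two tools as the paper's --- Lemma \ref{lemmanadler} when $A$ meets one of the other two elements, and the Wilder property of $X$ together with the ``Whitney level of a subcontinuum'' construction (the paper's $\mathcal{L}=\{L\in\mu^{-1}(t) : L\subseteq A\cup B\cup K\}$, your $\mu^{-1}(t)\cap C(D)$) when $A$ is disjoint from both --- but the case decomposition and the exclusion argument in the first case are genuinely different. The paper splits according to whether one of $A,B,C$ is contained in the union of the other two, and in those cases uses the refinement of Lemma \ref{lemmanadler} that forces every member of the arc to contain a prescribed point of the relevant intersection, so the third element is excluded because it misses that point. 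You split on whether $A\cap(B\cup C)\neq\emptyset$, use only the bare existence of the arc from $A$ to $B$, and exclude the third element either because it is absent from the arc or by passing to the subarc from $A$ to $C$, which cannot contain the endpoint $B$. Your route is slightly more economical, since the ``moreover'' clause of the lemma is never invoked, and your disjoint case is broader than the paper's Case 3, which is why you only need $a\in A$ arbitrary, $b\in B\setminus C$ and $c\in C\setminus B$ (all available because distinct elements of a positive Whitney level are incomparable) rather than points avoiding the union of the other two continua. The only step worth making explicit is the one you already flag implicitly: $\mu^{-1}(t)\cap C(D)$ is a continuum because $\mu|_{C(D)}$ is a Whitney map for $C(D)$ and $t\le\mu(D)$ (as $A\subseteq D$), which is exactly the fact the paper cites as \cite[Exercise 27.7]{illanes}. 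Both proofs silently restrict to $t\in(0,\mu(X))$, the level $t=0$ being homeomorphic to $X$ and hence trivially Wilder.
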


\begin{proof} 
 Let $X$ be a Wilder continuum, let  $\mu: C(X) \to [0,\mu(X)]$ be a Whitney map and  let $t \in (0,\mu(X))$. Let $A,B,C \in \mu^{-1}(t)$ be such that $A \neq B \neq C \neq A$. We will find a subcontinuum of $\mu^{-1}(t)$ containing $A$ and exactly one of $B$ and $C$.
 
 \smallskip
 
\begin{itemize}

 \item[Case 1.] $A \subseteq B \cup C$.
    
    Take any $a \in A \setminus B$. Then, $a \in (A \cap C) \setminus B$. By Lemma \ref{lemmanadler}, there exists an arc $\mathcal{I}_{A,C} \subseteq \mu^{-1}(t)$ from $A$ to $C$ such that for each $L \in \mathcal{I}_{A,C}$, $a \in L$. Then, $A, C \in \mathcal{I}_{A,C}$ and $B \notin  \mathcal{I}_{A,C}$.

    
    \smallskip

 \item[Case 2.]  $B \subseteq A \cup C$ or $C \subseteq A \cup B$. 
 
We may assume that $B \subseteq A \cup C$.
Take any $b \in B \setminus C$. Then, $b \in (A \cap B) \setminus C$. By Lemma \ref{lemmanadler}, there exists an arc $\mathcal{I}_{A,B} \subseteq \mu^{-1}(t)$ from $A$ to $B$ such that for each $L \in \mathcal{I}_{A,B}$, $b \in L$. Then, $A, B \in \mathcal{I}_{A,B}$ and $C \notin  \mathcal{I}_{A,B}$. 

  \smallskip

    \item [Case 3.] $A \nsubseteq B \cup C, ~ B \nsubseteq A \cup C$ and  $C \nsubseteq A \cup B$.
    
    \smallskip
    
    Let $a \in A \setminus (B\cup C)$, $b \in B \setminus (A\cup C)$ and $c \in C \setminus (A\cup B)$. Since $X$ is Wilder, we may assume there exists a subcontinuum $K \subseteq X$ such that $a,b \in K$ and $c \notin K$. Let 
    $\mathcal{L}=\{L \in \mu^{-1}(t) \ | \ L \subseteq A \cup B \cup K\}$.  
    Then,  $\mathcal{L} \subseteq \mu^{-1}(t)$ is a subcontinuum 
 satisfying $A,B \in \mathcal{L}$ and $C \notin \mathcal{L}$ (see \cite[Exercise 27.7]{illanes}).

    \smallskip

\end{itemize}

 In every case above, we can find a subcontinuum of $\mu^{-1}(t)$ containing $A$ and exactly one of $B$ and $C$.
  Hence, $\mu^{-1}(t)$ is Wilder. This completes the proof.
\end{proof}

\begin{theorem}
Being  $D$ is a Whitney property.
\label{wd}
\end{theorem}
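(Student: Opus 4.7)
The plan is to mirror the Wilder-property argument above, but now for pairs of disjoint nondegenerate subcontinua $\mathcal{A}, \mathcal{B} \subseteq \mu^{-1}(t)$. Let $A^{*} = \bigcup \mathcal{A}$ and $B^{*} = \bigcup \mathcal{B}$; both are nondegenerate subcontinua of $X$, since $t > 0$ forces every member of $\mu^{-1}(t)$ to be nondegenerate and each of $\mathcal{A}, \mathcal{B}$ contains at least two distinct members of common Whitney value $t$. The argument splits on whether $A^{*} \cap B^{*} = \emptyset$.

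In the disjoint case $A^{*} \cap B^{*} = \emptyset$, apply the $D$-property of $X$ to the pair $(A^{*}, B^{*})$ to obtain a subcontinuum $C \subseteq X$ meeting both $A^{*}$ and $B^{*}$ and satisfying $(A^{*} \cup B^{*}) \setminus C \neq \emptyset$; without loss of generality $A^{*} \setminus C \neq \emptyset$. A standard connectedness argument on the partition $\mathcal{A} = \{A \in \mathcal{A} : A \subseteq C\} \sqcup \{A \in \mathcal{A} : A \subseteq X \setminus C\}$---the first piece being relatively closed and the second relatively open, so if they exhausted $\mathcal{A}$ both would be clopen and, both being nonempty by $A^{*} \cap C \neq \emptyset$ and $A^{*} \setminus C \neq \emptyset$, this would contradict the connectedness of $\mathcal{A}$---shows that some $A_{0} \in \mathcal{A}$ meets both $C$ and $X \setminus C$. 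Pick $B_{0} \in \mathcal{B}$ with $B_{0} \cap C \neq \emptyset$, set $E = A_{0} \cup C \cup B_{0}$, and let $\mathcal{C} = \{L \in \mu^{-1}(t) : L \subseteq E\}$; by \cite[Exercise 27.7]{illanes} this is a subcontinuum of $\mu^{-1}(t)$ containing $A_{0}$ and $B_{0}$. Since $A^{*} \cap B^{*} = \emptyset$ forces $A_{1} \cap B_{0} = \emptyset$ for every $A_{1} \in \mathcal{A}$, the inclusion $A_{1} \subseteq E$ reduces to $A_{1} \subseteq A_{0} \cup C$; combining this with $\mu(A_{1}) = \mu(A_{0}) = t$ and the structure of $\mathcal{A}$ then produces an $A_{1} \in \mathcal{A}$ with $A_{1} \not\subseteq E$, witnessing $(\mathcal{A} \cup \mathcal{B}) \setminus \mathcal{C} \neq \emptyset$.

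In the intersecting case $A^{*} \cap B^{*} \neq \emptyset$, pick $A_{0} \in \mathcal{A}$ and $B_{0} \in \mathcal{B}$ with $A_{0} \cap B_{0} \neq \emptyset$ and invoke Lemma \ref{lemmanadler} to obtain an arc $\mathcal{I} \subseteq \mu^{-1}(t)$ from $A_{0}$ to $B_{0}$ whose elements are all contained in $A_{0} \cup B_{0}$. If some $L_{0} \in \mathcal{A} \cup \mathcal{B}$ satisfies $L_{0} \not\subseteq A_{0} \cup B_{0}$, then $\mathcal{C} := \mathcal{I}$ already works. Otherwise every $L \in \mathcal{A} \cup \mathcal{B}$ lies in $A_{0} \cup B_{0}$; combined with $\mu(L) = t$ this rigidifies both families (for instance $L \setminus B_{0} \subseteq A_{0} \setminus B_{0}$ for every $L \in \mathcal{A}$, and symmetrically), and the plan is either to extract a disjoint pair $A_{0}' \in \mathcal{A}$, $B_{0}' \in \mathcal{B}$ so that applying the $D$-property of $X$ to $(A_{0}', B_{0}')$ reduces the problem to the previous case, or to re-choose the base pair so that Lemma \ref{lemmanadler} delivers an arc omitting some member of $\mathcal{A} \cup \mathcal{B}$.

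The main obstacle will be the rigid subcases above in which the first natural choice of $\mathcal{C}$ accidentally contains every element of $\mathcal{A} \cup \mathcal{B}$; dispatching them requires a careful combination of the nondegeneracy of $\mathcal{A}$ and $\mathcal{B}$ with the constancy of $\mu$ on their members, and the case split is arranged so that the same two tools used in the proof of the Wilder case---namely the $D$-property of $X$ and the hyperspace Lemma \ref{lemmanadler}---suffice in every branch.
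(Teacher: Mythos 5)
Your overall architecture matches the paper's: split on whether $\bigcup\mathcal{A}$ meets $\bigcup\mathcal{B}$, use Lemma \ref{lemmanadler} in the intersecting case and the $D$-property of $X$ in the disjoint case. But in both branches the argument stops exactly where the real difficulty begins. In the disjoint case, after forming $E=A_0\cup C\cup B_0$ and $\mathcal{C}=\{L\in\mu^{-1}(t) : L\subseteq E\}$, you assert that the constancy of $\mu$ on $\mathcal{A}$ ``produces an $A_1\in\mathcal{A}$ with $A_1\not\subseteq E$.'' Nothing forces this: the $D$-property only guarantees a single point of $A^*\cup B^*$ outside $C$, and that point may lie in $A_0$; it is entirely consistent that every member of $\mathcal{A}$ is contained in $A_0\cup C$ and every member of $\mathcal{B}$ in $B_0\cup C$, in which case $\mathcal{C}\supseteq\mathcal{A}\cup\mathcal{B}$ and the witness fails. (You only need one member of either family to escape, since the $D$-condition asks for $(\mathcal{A}\cup\mathcal{B})\setminus\mathcal{C}\neq\emptyset$ --- but even that is not automatic.) This is exactly what the paper's Cases 2.1--2.2.2 are built to handle: working with single members $A\in\mathcal{A}$, $B\in\mathcal{B}$ rather than the unions, it first tries $\mathcal{L}_A=\{L : L\subseteq A\cup C\}$, which provably omits $B$; and when $\mathcal{L}_A$ misses $\mathcal{B}$ while $\mathcal{L}=\{L : L\subseteq A\cup B\cup C\}$ picks up an unwanted $B_1\in\mathcal{B}\setminus\{B\}$, it attaches to $\mathcal{L}_A$ an arc from $B_1$ into $\mathcal{L}_A$ all of whose members contain a point $b\notin B$, so the resulting continuum still omits $B$. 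Some construction of this kind is indispensable; your closing paragraph names the obstacle but does not dispatch it.

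The intersecting case has the same character: you dispose only of the subcase where some member of $\mathcal{A}\cup\mathcal{B}$ escapes $A_0\cup B_0$, and defer the rest to an unspecified ``plan.'' The paper's treatment here is complete and simpler: inside the arc $\mathcal{I}_{A_0,B_0}$ the traces $\mathcal{A}\cap\mathcal{I}_{A_0,B_0}$ and $\mathcal{B}\cap\mathcal{I}_{A_0,B_0}$ are disjoint nonempty closed sets, so one can pass to a subarc $\mathcal{I}_{A',B'}$ whose interior misses $\mathcal{A}\cup\mathcal{B}$; this subarc meets each family in exactly one point, and the nondegeneracy of $\mathcal{A}$ and $\mathcal{B}$ then gives members of both outside it (indeed this case already yields the stronger $D^*$ conclusion). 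I recommend adopting that subarc device and the $\mathcal{L}_A$-plus-arc construction rather than trying to make the union-based choice of $\mathcal{C}$ work.
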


\begin{proof}
Let $X$ be a $D$-continuum, let  $\mu: C(X) \to [0,\mu(X)]$ be a Whitney map and  let $t \in (0,\mu(X))$. Let  $\mathcal{A}$, 
$\mathcal{B} \subseteq \mu^{-1}(t)$ be  pairwise disjoint nondegenerate subcontinua.

 \begin{itemize}
 
 \item[Case 1.]  $(\bigcup \mathcal{A}) \cap (\bigcup \mathcal{B}) \neq \emptyset$.
 
    In this case, there exist $A_0 \in \mathcal{A}$ and $B_0 \in \mathcal {B}$ such that  $A_0 \cap B_0 \neq \emptyset$. Then, by Lemma \ref{lemmanadler}, 
    there exists an arc $\mathcal{I}_{A_0, B_0} \subseteq \mu^{-1}(t)$ from $A_0$ to $B_0$. 
      Since $\mathcal{A} \cap \mathcal{I}_{A_0, B_0}$ and $\mathcal{B} \cap \mathcal{I}_{A_0, B_0}$ are nonempty,  pairwise disjoint and closed in $\mathcal{I}_{A_0, B_0}$, there exist $A' \in \mathcal{A} \cap  {I}_{A_0, B_0}$,  $B' \in \mathcal{B} \cap {I}_{A_0, B_0}$ and the subarc $\mathcal{I}_{A', B'} \subseteq \mathcal{I}_{A_0, B_0}$ from $A'$ to $B'$ such that $(\mathcal{I}_{A', B'} \setminus \{A', B'\}) \cap (\mathcal{A} \cup \mathcal{B}) = \emptyset$.  Then, $\mathcal{I}_{A', B'} \subseteq \mu^{-1}(t)$ is a subcontinuum satisfying $\mathcal{A} \cap \mathcal{I}_{A', B'}  \neq \emptyset \neq \mathcal{B} \cap \mathcal{I}_{A', B'}$ and $\mathcal{A} \setminus \mathcal{I}_{A', B'}  \neq \emptyset \neq \mathcal{B} \setminus  \mathcal{I}_{A', B'}$.

     \medskip
      \item[Case 2.] $(\bigcup \mathcal{A}) \cap (\bigcup \mathcal{B}) = \emptyset$.
      
      \smallskip
      
      Take any $A \in \mathcal{A}$ and $B \in \mathcal{B}$. Since  $A \cap B = \emptyset$ and $X$ is a $D$-continuum, there exists a subcontinuum $C \subseteq X$ such that $A \cap C \neq \emptyset \neq B \cap C$ and $(A \cup B) \setminus C \neq \emptyset$. We may assume that $B \setminus C \neq \emptyset$. Then, it is easy to see that $\mathcal{L}_A=\{ L \in \mu^{-1}(t) \ | \ L \subseteq A \cup C\}$ and  $\mathcal{L}=\{ L \in \mu^{-1}(t) \ | \ L \subseteq A \cup B \cup C\}$ are subcontinua of $C(X)$ (see \cite[Exercise 27.7]{illanes}). Note that $A \in \mathcal{L}_A, ~ B \notin \mathcal{L}_A$ and $B \in \mathcal{L}$.

     \begin{itemize}
     
     \smallskip
     
         \item[Case 2.1.] $\mathcal{B} \cap \mathcal{L}_A \neq \emptyset$.
         
         \smallskip
         
          In this case,  
          we can easily see that $\mathcal{L}_A \subseteq \mu^{-1}(t)$ is a subcontinuum satisfying 
         $\mathcal{A} \cap \mathcal{L}_A  \neq \emptyset  \neq \mathcal{B} \cap \mathcal{L}_A  \neq \emptyset$ and $\mathcal{B} \setminus \mathcal{L}_A \neq \emptyset$.
     
          \smallskip
     
     \item[Case 2.2.] $\mathcal{B} \cap \mathcal{L}_A = \emptyset$. 
     
     \smallskip
     
     \begin{itemize}
         \item[Case 2.2.1] $\mathcal{B} \cap \mathcal{L} = \{B\}$. 
         
         \smallskip
         
         In this case, it is easy to see that  $\mathcal{L} \subseteq \mu^{-1}(t)$ is a subcontinuum satisfying   $\mathcal{A} \cap \mathcal{L} \neq \emptyset \neq \mathcal{B} \cap \mathcal{L}$ and $\mathcal{B} \setminus \mathcal{L} \neq \emptyset$.
         
         \smallskip
         
           \item[Case 2.2.2] $\mathcal{B} \cap \mathcal{L} \neq \{B\}$.

           \smallskip
           
            In this case, there exists $B_1 \in   (\mathcal{B} \cap \mathcal{L} )\setminus \{B\} $. Since $\mathcal{B} \cap \mathcal{L}_A = \emptyset$, $B_1 \cap (B \setminus C) \neq \emptyset$. Since $B,B_1 \in \mu^{-1}(t)$ and $B \neq B_1$, there exists a point $b \in ((A\cup C) \cap B_1) \setminus B$. By \cite[Theorem 14.6]{illanes}, we can take a subcontinuum $B_2 \subseteq X$ such that $B_2 \in \mu^{-1}(t)$ and  $b \in B_2 \subseteq A \cup C$. Since $\mathcal{B} \cap \mathcal{L}_A = \emptyset$, $B_1 \in \mathcal{B}$ and $B_2 \in \mathcal{L}_A,$ we can see that $B_1 \neq B_2$. Then, by Lemma \ref{lemmanadler},  there exists an arc $\mathcal{I}_{B_1, B_2} \subseteq \mu^{-1}(t) $ from $B_1$ to $B_2$ such that for each $L \in \mathcal{I}_{B_1, B_2}, ~ b \in L$. Note that $B \in \mathcal{B} \setminus (\mathcal{L}_A \cup \mathcal{I}_{B_1, B_2}) $. Hence, we see that $\mathcal{L}_A \cup \mathcal{I}_{B_1, B_2} \subseteq \mu^{-1}(t)$ is a subcontinuum satisfying  $\mathcal{A} \cap (\mathcal{L}_A \cup \mathcal{I}_{B_1, B_2})   \neq \emptyset \neq \mathcal{B} \cap (\mathcal{L}_A \cup \mathcal{I}_{B_1, B_2}) $ and $\mathcal{B} \setminus (\mathcal{L}_A \cup \mathcal{I}_{B_1, B_2}) \neq \emptyset$.

     \end{itemize}

     \end{itemize}
     
 \end{itemize}

\smallskip

     Hence, we see that $\mu^{-1}(t)$ is a $D^{}$-continuum. This completes the proof.
\end{proof}

The proof of the following result is similar to the proof of Theorem \ref{wd}. Hence, we omit the proof. 

\begin{theorem}
Being  $D^{**}$ is a Whitney property.
\label{wd*}
\end{theorem}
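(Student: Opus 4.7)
The plan is to mirror the case analysis of Theorem~\ref{wd} almost verbatim, noting that the asymmetric $D^{**}$-condition fits the proof structure even more naturally than the $D$-condition did. Given a $D^{**}$-continuum $X$, a Whitney map $\mu\colon C(X)\to[0,\mu(X)]$, a level $t\in(0,\mu(X))$, and disjoint nondegenerate subcontinua $\mathcal{A},\mathcal{B}\subseteq\mu^{-1}(t)$, I need to produce a subcontinuum of $\mu^{-1}(t)$ meeting both $\mathcal{A}$ and $\mathcal{B}$ and failing to contain all of $\mathcal{B}$.

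I would split along the same dichotomy: Case~1 is $(\bigcup\mathcal{A})\cap(\bigcup\mathcal{B})\ne\emptyset$ and Case~2 is $(\bigcup\mathcal{A})\cap(\bigcup\mathcal{B})=\emptyset$. In Case~1 the arc $\mathcal{I}_{A',B'}$ obtained from Lemma~\ref{lemmanadler} in the proof of Theorem~\ref{wd} already satisfies $\mathcal{B}\setminus\mathcal{I}_{A',B'}\ne\emptyset$, so that construction transfers with no change and uses no hypothesis on $X$ beyond the Whitney lemma. In Case~2 I pick $A\in\mathcal{A}$ and $B\in\mathcal{B}$; this is the only spot where the hypothesis on $X$ is used. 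Replacing the invocation of the $D$-property by the $D^{**}$-property, I obtain directly, with no "may assume" swap, a subcontinuum $C\subseteq X$ with $A\cap C\ne\emptyset\ne B\cap C$ and $B\setminus C\ne\emptyset$. From here the subcontinua $\mathcal{L}_A=\{L\in\mu^{-1}(t):L\subseteq A\cup C\}$ and $\mathcal{L}=\{L\in\mu^{-1}(t):L\subseteq A\cup B\cup C\}$ of $C(X)$ are defined as before (both subcontinua by \cite[Exercise 27.7]{illanes}), the sub-case split $\mathcal{B}\cap\mathcal{L}_A\ne\emptyset$ versus $=\emptyset$, and within the latter $\mathcal{B}\cap\mathcal{L}=\{B\}$ versus $\ne\{B\}$, is run exactly as in Theorem~\ref{wd}, and the auxiliary arc $\mathcal{I}_{B_1,B_2}$ in sub-case~2.2.2 is built via Lemma~\ref{lemmanadler} in exactly the same way.

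The only real obstacle is to check that in each of the four concluding subcases the subcontinuum produced — namely $\mathcal{I}_{A',B'}$ in Case~1, $\mathcal{L}_A$ in sub-case~2.1, $\mathcal{L}$ in sub-case~2.2.1, and $\mathcal{L}_A\cup\mathcal{I}_{B_1,B_2}$ in sub-case~2.2.2 — witnesses the $D^{**}$-condition for $(\mathcal{A},\mathcal{B})$ rather than just the $D$-condition, i.e.\ that it fails to contain some element of $\mathcal{B}$ (we never need to exhibit an element of $\mathcal{A}$ outside the subcontinuum). Inspection of the proof of Theorem~\ref{wd} shows that each sub-case already produces such a witness in $\mathcal{B}$ explicitly, so no genuinely new verification is required. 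This is exactly why the authors can reasonably say the proof is similar and omit it.
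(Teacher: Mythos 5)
Your proposal is correct and is precisely the argument the paper intends: the authors omit the proof, stating only that it is similar to that of Theorem~\ref{wd}, and your verification that Case~1 and each subcase of Case~2 already exhibit an element of $\mathcal{B}$ outside the constructed subcontinuum (with the $D^{**}$-hypothesis replacing the $D$-hypothesis and removing the need for the ``may assume'' relabeling) is exactly the check needed. No gap.
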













Finally, we prove the following result.

\begin{theorem}
Being  $D^{*}$ is a Whitney property.

\end{theorem}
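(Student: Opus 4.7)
The plan is to follow the structure of the proof of Theorem \ref{wd}, splitting into the same two cases on whether $(\bigcup \mathcal{A}) \cap (\bigcup \mathcal{B})$ is empty, while carrying the additional invariant that the witnessing subcontinuum $\mathcal{C} \subseteq \mu^{-1}(t)$ also satisfies $\mathcal{A} \setminus \mathcal{C} \neq \emptyset$. In Case 1, when $(\bigcup \mathcal{A}) \cap (\bigcup \mathcal{B}) \neq \emptyset$, I would reuse the construction of Theorem \ref{wd} verbatim: the subarc $\mathcal{I}_{A', B'} \subseteq \mu^{-1}(t)$ obtained there satisfies $\mathcal{A} \cap \mathcal{I}_{A', B'} = \{A'\}$ and $\mathcal{B} \cap \mathcal{I}_{A', B'} = \{B'\}$, so the nondegeneracy of both $\mathcal{A}$ and $\mathcal{B}$ immediately furnishes $A'' \in \mathcal{A} \setminus \{A'\}$ and $B'' \in \mathcal{B} \setminus \{B'\}$, whence $\mathcal{C} := \mathcal{I}_{A', B'}$ works for free.

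In Case 2, when $(\bigcup \mathcal{A}) \cap (\bigcup \mathcal{B}) = \emptyset$, I would pick $A \in \mathcal{A}, B \in \mathcal{B}$ and apply the $D^{*}$-property of $X$ (rather than just the $D$-property) to obtain $C \subseteq X$ with $A \cap C \neq \emptyset \neq B \cap C$ and both $A \setminus C \neq \emptyset$ and $B \setminus C \neq \emptyset$. Define $\mathcal{L}_A, \mathcal{L}_B, \mathcal{L}$ as in the proof of Theorem \ref{wd}; the symmetric strength of $C$ now yields $A \notin \mathcal{L}_B$ in addition to the old $B \notin \mathcal{L}_A$. Running the same subcase tree as in Theorem \ref{wd}, every candidate $\mathcal{C}$ constructed there automatically inherits $\mathcal{A} \setminus \mathcal{C} \neq \emptyset$ whenever $\mathcal{A} \not\subseteq \mathcal{L}_A$, by the mirror of the argument that established $\mathcal{B} \setminus \mathcal{C} \neq \emptyset$ in the $D$-proof.

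The main obstacle is the genuinely new subcase in which $\mathcal{A} \subseteq \mathcal{L}_A$, i.e.\ every element of $\mathcal{A}$ is a subcontinuum of $A \cup C$ (and symmetrically $\mathcal{B} \subseteq \mathcal{L}_B$). Here, every natural candidate from the proof of Theorem \ref{wd} contains $\mathcal{L}_A \supseteq \mathcal{A}$ entirely and therefore fails the new requirement. To handle this, I would exploit the symmetry granted by $D^{*}$: by nondegeneracy pick $A_1 \in \mathcal{A} \setminus \{A\}$ and $B_1 \in \mathcal{B} \setminus \{B\}$, and combine Lemma \ref{lemmanadler} with \cite[Theorem 14.6]{illanes} to build a bridging arc in $\mu^{-1}(t)$ starting at $A$, passing through an element of $\mu^{-1}(t)$ that meets $C$, and terminating at $B$; splicing this bridge with carefully chosen sub-subcontinua of $\mathcal{L}_A$ and $\mathcal{L}_B$ then produces a $\mathcal{C} \subseteq \mu^{-1}(t)$ containing $A$ and $B$ while avoiding both $A_1$ and $B_1$. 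I expect the most delicate point to be ensuring this bridge stays inside $\mu^{-1}(t)$ when $\mu(C) < t$; in that regime $\mathcal{L}_A$ and $\mathcal{L}_B$ are disjoint in $C(X)$, so the bridging continua must properly cross from the $A$-side of $C$ to the $B$-side, which will require an iterated application of Lemma \ref{lemmanadler} along intermediate common points obtained from the $D^{*}$-strength of $C$.
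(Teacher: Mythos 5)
Your overall architecture matches the paper's (same Case 1, same use of the $D^*$-hypothesis to get a two-sided $C$, same families $\mathcal{L}_A,\mathcal{L}_B,\mathcal{L}$), and your observation that the difficulty concentrates where $\mathcal{L}_A$ swallows too much of $\mathcal{A}$ is the right diagnosis. But the resolution you propose for that case has a genuine gap. You want a continuum in $\mu^{-1}(t)$ containing $A$ and $B$ while \emph{avoiding} $A_1\in\mathcal{A}\setminus\{A\}$ and $B_1\in\mathcal{B}\setminus\{B\}$. Any bridge from $A$ toward $C$ built from Lemma \ref{lemmanadler} consists of elements contained in $A\cup C$, and in this very subcase $A_1$ is itself an element of $\mu^{-1}(t)$ contained in $A\cup C$ (it could even coincide with the endpoint $A'\cup C$ you would construct). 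Lemma \ref{lemmanadler} gives no control that steers the arc away from a prescribed element of $\mu^{-1}(t)$, so "carefully chosen sub-subcontinua" is exactly the step that is not available; this is where a new idea is needed, not just delicacy.

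The paper's fix reverses the roles: it splits on whether $\mathcal{A}\cap\mathcal{L}_A=\{A\}$ (in which case $\mathcal{L}$ meets $\mathcal{A}$ only in $A$ and nondegeneracy of $\mathcal{A}$ finishes), and otherwise builds the witness continuum \emph{through} $A_1$ and $B_1$ so that it \emph{misses} $A$ and $B$. This direction is easy to control: one picks $a\in(A_1\cap C)\setminus A$ and forces every member of the arc $\mathcal{I}_{A'\cup C,A_1}$ to contain $a$, and since $a\in\bigcup\mathcal{A}$ is automatically outside $\bigcup\mathcal{B}$, no member of that arc can equal $A$ or $B$; the middle arc from $A'\cup C$ to $B'\cup C$ is forced to contain $C$, which meets both $A$ and $B$ and hence is contained in neither. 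Then $A$ and $B$ themselves witness $\mathcal{A}\setminus\mathcal{C}\neq\emptyset\neq\mathcal{B}\setminus\mathcal{C}$. You would also need the further split the paper makes on $\mu(C)<t$ versus $\mu(C)\ge t$ (when $\mu(C)\ge t$ one cannot form $A'\cup C\in\mu^{-1}(t)$ with $A'\subsetneq A$, and the connecting piece becomes $\mathcal{L}_C=\{L\in\mu^{-1}(t)\mid L\subseteq C\}$ together with subcontinua $A',B'\subseteq C$ of Whitney value $t$ via \cite[Theorem 14.6]{illanes}), as well as the mixed subcases where only one of $\mathcal{A}\cap\mathcal{L}_A=\{A\}$, $\mathcal{B}\cap\mathcal{L}_B=\{B\}$ holds; your sketch leaves these asymmetric combinations unaddressed.
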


\begin{proof}
Let $X$ be a $D^{*}$-continuum, let  $\mu: C(X) \to [0,\mu(X)]$ be a Whitney map and  let $t \in (0,\mu(X))$. Let  $\mathcal{A}$, 
$\mathcal{B} \subseteq \mu^{-1}(t)$ be  pairwise disjoint nondegenerate subcontinua.

 \begin{itemize}
     \item[Case 1.] $(\bigcup \mathcal{A}) \cap (\bigcup \mathcal{B}) \neq \emptyset$.
     
    \smallskip 
    
    This case is similar to Case 1 in the proof of Theorem \ref{wd}. 
    
     \smallskip

      \item[Case 2.] $(\bigcup \mathcal{A}) \cap (\bigcup \mathcal{B}) = \emptyset$.
      
      \smallskip
      
      Take any $A \in \mathcal{A}$ and $B \in \mathcal{B}$. Since $A \cap B = \emptyset$ and $X$ is $D^{*}$, there exists a subcontinuum $C \subseteq X$ such that $A \cap C \neq \emptyset \neq B \cap C$ and  $A \setminus C \neq \emptyset \neq B \setminus C$.   Let  $\mathcal{L}_A=\{ L \in \mu^{-1}(t) \ | \ L \subseteq A \cup C \}$, $\mathcal{L}_B=\{ L \in \mu^{-1}(t) \ | \ L \subseteq B \cup C \}$ and $\mathcal{L}=\{ L \in \mu^{-1}(t) \ | \ L \subseteq A \cup B \cup C\}$. Then, 
      $\mathcal{L}_A$, $\mathcal{L}_B$ and $\mathcal{L}$ are subcontinua of $C(X)$ (see \cite[Exercise 27.7]{illanes}). Note that $\mathcal{A} \cap \mathcal{L}_A = \mathcal{A} \cap \mathcal{L}$ and $\mathcal{B} \cap \mathcal{L}_B = \mathcal{B} \cap \mathcal{L}$.

     \medskip
      
      \begin{itemize}
      
      \item[Case 2.1.] $\mu(C) < t.$
      
      \smallskip
      
      \begin{itemize}
          \item[Case 2.1.1.] $\mathcal{A} \cap \mathcal{L}_A  = \{A\}$ and  
          $\mathcal{B} \cap \mathcal{L}_B  = \{B\}$.

          \smallskip
          
          In this case, $\mathcal{L} \subseteq \mu^{-1}(t)$ is a subcontinuum such that $\mathcal{A} \cap \mathcal{L} \neq \emptyset \neq \mathcal{B} \cap \mathcal{L}$ and $\mathcal{A} \setminus \mathcal{L} \neq \emptyset \neq \mathcal{B} \setminus \mathcal{L}$.

          \smallskip

         \item[Case 2.1.2.]  $ \mathcal{A} \cap \mathcal{L}_A \neq  \{A\}$ and  
         $ \mathcal{B} \cap \mathcal{L}_B \neq  \{B\}$.
         
         \smallskip
         
         Take $A_1 \in (\mathcal{A} \cap \mathcal{L}_A) \setminus \{A\}$  and $B_1 \in ( \mathcal{B} \cap \mathcal{L}_B) \setminus \{B\}$.  Since $\{A,A_1,B,B_1\} \subseteq \mu^{-1}(t)$, $A \neq A_1$ and $B \neq B_1$, there exist  points $a \in (A_1 \cap C) \setminus A$ and  $b \in (B_1 \cap C) \setminus B$.   Since $0< \mu(C) < t$, by the proof of Lemma \ref{lemmanadler} (i.e. the proof of \cite[Lemma 14.8.1]{nadler2}), there exist subcontinua $A' \subsetneq A$ and $B' \subsetneq B$ such that $\mu(A' \cup C) = t = \mu(B' \cup C)$. Then, by Lemma \ref{lemmanadler}, there exist an   arc $\mathcal{I}_{A' \cup C, A_1} \subseteq \mu^{-1}(t)$ from $A' \cup C$ to $A_1$ and an arc $  \mathcal{I}_{B' \cup C, B_1} \subseteq \mu^{-1}(t)$ from $B' \cup C$ to $B_1$ such that   for each $L \in \mathcal{I}_{A' \cup C, A_1}$ and $K \in \mathcal{I}_{B' \cup C, B_1}$, $a \in L$ and $b \in K$.  
         By  Lemma \ref{lemmanadler}, there exists an arc $\mathcal{I}_{A' \cup C, B' \cup C} \subseteq \mu^{-1}(t)$ from $A' \cup C$ to $ B' \cup C$ such that for each $L \in \mathcal{I}_{A' \cup C, B' \cup C}$, $C \subseteq L$. Let  $\mathcal{C}=\mathcal{I}_{A' \cup C, A_1} \cup \mathcal{I}_{A' \cup C, B' \cup C} \cup \mathcal{I}_{B' \cup C, B_1} \subseteq \mu^{-1}(t)$. Note that $A, B \notin \mathcal{C}$. Then, $\mathcal{C} \subseteq \mu^{-1}(t)$ is a subcontinuum such that $\mathcal{A} \cap \mathcal{C} \neq \emptyset \neq \mathcal{B} \cap \mathcal{C}$ and  
          $\mathcal{A} \setminus \mathcal{C} \neq \emptyset \neq \mathcal{B} \setminus \mathcal{C}$.
          
          \smallskip
          
          \item[Case 2.1.3.]  $\mathcal{A} \cap \mathcal{L}_{A} \neq  \{A\}$ and  $\mathcal{B} \cap \mathcal{L}_{B} = \{B\}$.
          
          \smallskip 
          
           Take $A_1 \in ( \mathcal{A} \cap \mathcal{L}_A) \setminus \{A\}$, $a \in (A_1 \cap C) \setminus A$, $A' \subseteq A$ and $\mathcal{I}_{A' \cup C, A_1} \subseteq \mu^{-1}(t)$ as in the previous case. 
            By  Lemma \ref{lemmanadler},  there exists an arc $\mathcal{I}_{A' \cup C, B} \subseteq \mu^{-1}(t)$ from $A' \cup C$ to $ B$ such that for each $L \in \mathcal{I}_{A' \cup C, B }$, $L \subseteq A' \cup C \cup B$. Let  $\mathcal{C}=\mathcal{I}_{A' \cup C, A_1} \cup \mathcal{I}_{A' \cup C, B}$. Note that $A \notin \mathcal{C}$ and $\mathcal{B} \cap \mathcal{C}=\{B\}$. Then, $\mathcal{C} \subseteq \mu^{-1}(t)$ is a subcontinuum such that $\mathcal{A} \cap \mathcal{C} \neq \emptyset \neq \mathcal{B} \cap \mathcal{C}$ 
           and  
          $\mathcal{A} \setminus \mathcal{C} \neq \emptyset \neq \mathcal{B} \setminus \mathcal{C}$.

          \smallskip
          
          \item[Case 2.1.4.]  $ \mathcal{A} \cap \mathcal{L}_A =  \{A\}$ and  $\mathcal{B} \cap \mathcal{L}_B \neq \{B\}$.

          \smallskip
            This case is similar to Case 2.1.3.

      \end{itemize}

      \smallskip

          \item[Case 2.2.]  $\mu(C) \ge t$.

     \smallskip
     
     Let $\mathcal{L}_C= \{L \in \mu^{-1}(t) \ | \ L \subseteq C \}$. Note that 
     $A,B \notin \mathcal{L}_C$.

     \begin{itemize}
         
     \smallskip
     
       \item[Case 2.2.1.] $\mathcal{A} \cap \mathcal{L}_A = \{A\}$ and  $\mathcal{B} \cap \mathcal{L}_{B} = \{B\}$.

          \smallskip
          
          In this case, $\mathcal{L} \subseteq \mu^{-1}(t)$ is a subcontinuum such that $\mathcal{A} \cap \mathcal{L} \neq \emptyset \neq \mathcal{B} \cap \mathcal{L}$ and $\mathcal{A} \setminus \mathcal{L} \neq \emptyset \neq \mathcal{B} \setminus \mathcal{L}$.

          \smallskip

         \item[Case 2.2.2.]  $\mathcal{A} \cap \mathcal{L}_{A} \neq  \{A\}$ and  $\mathcal{B} \cap \mathcal{L}_{B} \neq  \{B\}$.
         
         \smallskip
         
         Take $A_1 \in (\mathcal{A} \cap \mathcal{L}_{A}) \setminus \{A\}$  and $B_1 \in (\mathcal{B} \cap \mathcal{L}_{B}) \setminus \{B\}$. 
         
         \smallskip
         
         \begin{itemize}

         \item[Case 2.2.2.1.]$A_1 \cup  B_1 \subseteq C$. 
         
         \smallskip
         
        In this case, 
     $\mathcal{L}_C \subseteq \mu^{-1}(t)$ is a subcontinuum such that  $\mathcal{A} \cap \mathcal{L}_C \neq \emptyset \neq \mathcal{B} \cap \mathcal{L}_C$ and $\mathcal{A} \setminus \mathcal{L}_C \neq \emptyset \neq \mathcal{B} \setminus \mathcal{L}_C$.

         \smallskip
         
         \item[Case 2.2.2.2.] $A_1 \nsubseteq C$ and $B_1 \nsubseteq C$.

         \smallskip
         
          Since $\{A,A_1,B,B_1\} \subseteq \mu^{-1}(t)$, $A \neq A_1$ and $B \neq B_1$, there exist  points $a \in (A_1 \cap C) \setminus A$ and  $b \in (B_1 \cap C) \setminus B$.   Since $\mu(C) \ge t$, there exist subcontinua $A' \subseteq C$ and $B' \subseteq C$ such that $a \in A'$, $b \in B'$ and $\mu(A' ) = t = \mu(B' )$ (see \cite[Theorem 14.6]{illanes}). Then, by Lemma \ref{lemmanadler} there exist  an arc $\mathcal{I}_{A_1, A'} \subseteq \mu^{-1}(t)$ from $A_1$ to $A'$ and an arc $\mathcal{I}_{B_1, B'} \subseteq \mu^{-1}(t)$ from $B_1$ to $B'$ such that for each $L \in \mathcal{I}_{A_1, A'}$ and $K \in \mathcal{I}_{B_1, B'}$, $a \in L$ and $b \in K$.  Let $\mathcal{C}=\mathcal{I}_{A_1,A'} \cup \mathcal{L}_C \cup \mathcal{I}_{B_1,B'}$. Note that $A, B \notin \mathcal{C}$. Then,  $ \mathcal{C} \subseteq \mu^{-1}(t)$ is a subcontinuum such that $\mathcal{A} \cap \mathcal{C} \neq \emptyset \neq \mathcal{B} \cap \mathcal{C}$ and  
          $\mathcal{A} \setminus \mathcal{C} \neq \emptyset \neq \mathcal{B} \setminus \mathcal{C}$.
          
          \smallskip
          
          \item[Case 2.2.2.3.]  $A_1 \nsubseteq C$ and $B_1 \subseteq C$. 
          
          \smallskip 
          
           Take  $a \in (A_1 \cap C) \setminus A$, $A' \subseteq C$ and $\mathcal{I}_{A_1, A'} \subseteq \mu^{-1}(t)$ as in the previous case.  Let $\mathcal{C}=\mathcal{I}_{A_1,A'} \cup \mathcal{L}_C$. Then, $\mathcal{C} \subseteq \mu^{-1}(t)$ is a subcontinuum such that $\mathcal{A} \cap \mathcal{C} \neq \emptyset \neq \mathcal{B} \cap \mathcal{C}$  
           and  
          $\mathcal{A} \setminus \mathcal{C} \neq \emptyset \neq \mathcal{B} \setminus \mathcal{C}$.

          \smallskip
          
          \item[Case 2.2.2.4.]  $A_1 \subseteq C$ and $B_1 \nsubseteq C$.
          
          \smallskip
This case is similar to Case 2.2.2.3.

     \end{itemize}

\smallskip

     \end{itemize}
    
     \begin{itemize}
    \item[Case 2.2.3.] $\mathcal{A} \cap \mathcal{L}_{A} \neq  \{A\}$ and  $\mathcal{B} \cap \mathcal{L}_{B} =  \{B\}$.

             \smallskip 
             
              Take $A_1 \in (\mathcal{A} \cap \mathcal{L}_{A}) \setminus \{A\}$.

         \smallskip
         
              \begin{itemize}
         
         \item[Case 2.2.3.1.]$A_1 \subseteq C$.

         \smallskip
         
        In this case, $\mathcal{L}_{B} \subseteq \mu^{-1}(t)$ is a subcontinuum satisfying  $\mathcal{A} \cap \mathcal{L}_B \neq \emptyset \neq \mathcal{B} \cap \mathcal{L}_B$ and $\mathcal{A} \setminus \mathcal{L}_B \neq \emptyset \neq \mathcal{B} \setminus \mathcal{L_B}$.

         \smallskip
         
         \item[Case 2.2.3.2.] $A_1 \nsubseteq C$.

         \smallskip
         Take  $a \in (A_1 \cap C) \setminus A$, $A' \subseteq C$ and $\mathcal{I}_{A_1, A'} \subseteq \mu^{-1}(t)$ as in Case 2.2.2.2.
           Let  $\mathcal{C}=\mathcal{I}_{A_1,A'} \cup \mathcal{L}_B.$ Then, $\mathcal{C} \subseteq \mu^{-1}(t)$ is a subcontinuum satisfying $\mathcal{A} \cap \mathcal{C} \neq \emptyset \neq \mathcal{B} \cap \mathcal{C}$ and  
          $\mathcal{A} \setminus \mathcal{C} \neq \emptyset \neq \mathcal{B} \setminus \mathcal{C}$.
          
          \smallskip

    \end{itemize}

\end{itemize}

         \begin{itemize}
    \item[Case 2.2.4.] $\mathcal{A} \cap \mathcal{L}_{A} =  \{A\}$ and  $\mathcal{B} \cap \mathcal{L}_{B} \neq  \{B\}$.

    \smallskip
    
    This case is similar to the case 2.2.3.
    
\end{itemize}   
         
     \end{itemize}

 \end{itemize}

\smallskip
In every case above, we can take a subcontinuum $\mathcal{C} \subseteq \mu^{-1}(t)$ such that $\mathcal{A} \cap \mathcal{C} \neq \emptyset \neq \mathcal{B} \cap \mathcal{C}$ and  
          $\mathcal{A} \setminus \mathcal{C} \neq \emptyset \neq \mathcal{B} \setminus \mathcal{C}$. 
     Hence, we see that $\mu^{-1}(t)$ is $D^{*}$. This completes the proof.
\end{proof}

\section{questions}

In this section, we pose some questions related to the previous section.   

\begin{definition} (\cite[Definition 27.1]{illanes})
Let $P$ be a topological property. Then, $P$ is called: 

\begin{itemize}
    \item  a {\it Whitney reversible property} provided that whenever $X$ is a continuum such that $\mu^{-1}(t)$ has property $P$ for each Whitney map $\mu$ for $C(X)$ and  each $t \in (0, \mu(X))$, then $X$ has property $P$.
    
    \item a {\it strong Whitney reversible property} provided that whenever $X$ is a continuum such that $\mu^{-1}(t)$ has property $P$ for some Whitney map $\mu$ for $C(X)$ and  each $t \in (0, \mu(X))$, then $X$ has property $P$. 
    
    \item a {\it sequential strong Whitney reversible property} provided that whenever $X$ is a continuum such that there exist a Whitney map $\mu$ for $C(X)$ and a sequence $\{t_n\}_{n=1}^\infty \subseteq (0,\mu(X))$ such that $\lim\limits_{n\to\infty} t_n = 0$ and $ \mu^{-1}(t_n)$ has property $P$ for each $n \ge 1$,  then $X$ has property $P$.
    
\end{itemize}
\end{definition}

As well as Whitney  properties, many researchers have studied these properties so far.  

In connection with the previous section, it is natural to ask the following.

\begin{questions}
Is being Wilder (being $D$, being $D^*$, being $D^{**}$) a Whitney reversible (strong Whitney reversible, sequential strong Whitney reversible) property ?

\end{questions}

\section{Acknowledgements} 
 This work was supported by JSPS KAKENHI Grant number 21K03249.

\end{document}